\newtheorem{theorem}{Theorem}[section]
\newtheorem{lemma}[theorem]{Lemma}
\newtheorem{example}[theorem]{Example}
\newtheorem{proposition}[theorem]{Proposition}
\theoremstyle{definition}
\newtheorem{definition}[theorem]{Definition}
\newtheorem{remark}[theorem]{Remark}
\newtheorem{corollary}[theorem]{Corollary}
\numberwithin{equation}{section}
\begin{document}

%%%%%%%%%%%%%%%%%%%%%%%%%%%%%%%%%%%%%%%%%%%%%%%%%%%%%%%%%%%%
%%%%%%%%%%%%%%%%%%%%%%%%%%%%%%%%%%%%%%%%%%%%%%%%%%%%%%%%%%%%
% This a placeholder for the TOPLOGY PROCEEDINGS logo %%%%%%
%\noindent                                             %%%%%%
%\begin{picture}(150,36)                               %%%%%%
%\put(5,20){\tiny{Submitted to}}                       %%%%%%
%\put(5,7){\textbf{Topology Proceedings}}              %%%%%%
%\put(0,0){\framebox(140,34){}}                        %%%%%%
%\put(2,2){\framebox(136,30){}}                        %%%%%%
%\end{picture}                                        %%%%%%
%%%%%%%%%%%%%%%%%%%%%%%%%%%%%%%%%%%%%%%%%%%%%%%%%%%%%%%%%%%%
%%%%%%%%%%%%%%%%%%%%%%%%%%%%%%%%%%%%%%%%%%%%%%%%%%%%%%%%%%%%
\vspace{0.5in}

\renewcommand{\bf}{\bfseries}
\renewcommand{\sc}{\scshape}
%insert defs/styles
\vspace{0.5in}

\title[Topological Complexity of wedges]%
{Topological Complexity of wedges \\ }

%    Information for first author:
\author{Cesar A. Ipanaque Zapata}
\address{Deparatmento de Matem\'{a}tica,UNIVERSIDADE DE S\~{A}O PAULO
INSTITUTO DE CI\^{E}NCIAS MATEM\'{A}TICAS E DE COMPUTA\c{C}\~{A}O -
USP , Avenida Trabalhador S\~{a}o-carlense, 400 - Centro CEP:
13566-590 - S\~{a}o Carlos - SP, Brasil}
%    Current address (if needed):
%\curraddr{}
\email{cesarzapata@usp.br}
%\thanks{The first author wishes to acknowledge support for this research, from FAPESP 2016/18714-8.}

%    Information for second author (if needed):
%\author{Denise de Mattos}
%\address{Deparatmento de Matem\'{a}tica,UNIVERSIDADE DE S\~{A}O PAULO
%INSTITUTO DE CI\^{E}NCIAS MATEM\'{A}TICAS E DE COMPUTA\c{C}\~{A}O -
%USP , Avenida Trabalhador S\~{a}o-carlense, 400 - Centro CEP:
%13566-590 - S\~{a}o Carlos - SP, Brasil}
%\email{deniseml@icmc.usp.br}
%\thanks{Support information for the second author.}

%    General info
%%%%%%%%%%%%%%%%%%%%%%%%%%%%%%%%%%%%%%%%%%%%%%%%%%%
\subjclass[2010]{Primary 55M30; Secondary 55P10, 54B10, 54C35}                                    %
%                                                                                                                           %
%         Please use the current 2010 Mathematics Subject Classification:             %
%         http://www.ams.org/mathscinet/msc/                                                        %
%         http://www.zentralblatt-math.org/msc/en/                                                 %
%%%%%%%%%%%%%%%%%%%%%%%%%%%%%%%%%%%%%%%%%%%%%%%%%%%

\keywords{CW complexes, Topological complexity, Lusternik-Schnirelmann category}
\thanks {The first author wishes to acknowledge support for this research, from FAPESP 2016/18714-8.}

\begin{abstract} We prove the formula \begin{equation*}
    TC(X\vee Y)=\max\{TC(X),TC(Y),cat(X\times Y)\}
\end{equation*} for the topological complexity of the wedge $X\vee Y$.
\end{abstract}

\maketitle

%%%%%%%%%%%%%%%%%%%%%%%%%%%%%%%%%%%%%%%%%%%%%%%%%%%%%%%%%%%%%%

\section{\bf Introduction}
Let $X$ be a topological space. We follow a definition of category, one greater than category given in  \cite{cornea2003lusternik}. 

\begin{definition}
We say that the \textit{Lusternik-Schnirelmann category} our category of a topological space $X$, denoted $cat(X)$, is the least integer $n$ such that $X$ can be covered with $n$ open sets, which are all contractible within $X$. 
\end{definition}

Michael Farber, in  \cite{farber2003topological}, defined a numerical invariant $TC(X)$. 

\begin{definition}
The \textit{Topological complexity} of a path-connected space $X$ is the least integer $n$ such that the Cartesian product $X\times X$ can be covered with $n$ open subsets $U_i$, \begin{equation*}
        X \times X = U_1 \cup U_2 \cup\cdots \cup U_n 
    \end{equation*} such that for any $i = 1, 2, \ldots , n$ there exists a continuous function $s_i : U_i \longrightarrow PX$, $\pi\circ s_i = id$ over $U_i$. If no such $n$ exists we will set $TC(X)=\infty$. Where $PX$ denote the space of all continuous paths $\gamma: [0,1] \longrightarrow X$ in $X$ and  $\pi: PX \longrightarrow X \times X$ denotes the
map associating to any path $\gamma\in PX$ the pair of its initial and end points $\pi(\gamma)=(\gamma(0),\gamma(1))$. Equip the path space $PX$ with the compact-open topology. 
\end{definition}

\begin{remark}
 One of the basic properties of $cat(X)$ and $TC(X)$ are its homotopy invariance, respectively (\cite{cornea2003lusternik}, Theorem 1.30; \cite{farber2003topological}, Theorem 3).
\end{remark}

In this paper we present a formula for topological complexity of the wedge $X\vee Y$.

\begin{theorem}\label{tc-wedges-}
Let $X,Y$ be Hausdorff normal topological spaces and  path connected with non-degenerate basepoints, such that $X\times X, Y\times Y$ and $X\times Y$ are normal. Then
\begin{equation}
TC(X\vee Y)=\max\{TC(X),TC(Y), cat(X\times Y)\}.
\end{equation}
\end{theorem}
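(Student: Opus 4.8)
The plan is to prove the two inequalities separately. Throughout write $Z:=X\vee Y$ with common basepoint $*$, let $i_X\colon X\hookrightarrow Z$, $i_Y\colon Y\hookrightarrow Z$ be the inclusions, and let $r_X\colon Z\to X$, $r_Y\colon Z\to Y$ be the retractions collapsing the opposite summand to $*$, so that $r_Xi_X=\mathrm{id}_X$ and $r_Yi_Y=\mathrm{id}_Y$.

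For the lower bound I would first record that $X$ and $Y$ are retracts of $Z$ and that $TC$ is monotone under retracts: pulling a motion planner on $Z$ back along $i_X\times i_X$ and post-composing the resulting $Z$-paths with $r_X$ gives a motion planner on $X$ over the preimage cover (the endpoints come out right since $r_Xi_X=\mathrm{id}_X$), so $TC(X)\le TC(Z)$ and likewise $TC(Y)\le TC(Z)$. The essential point is $cat(X\times Y)\le TC(Z)$. Here I would use the identification of $cat(W)$ with the sectional category of the based path fibration $ev_1\colon P_*W\to W$ (for path-connected $W$ an open $U$ is contractible in $W$ iff $ev_1$ admits a section over $U$). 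Given a motion-planning cover $Z\times Z=W_1\cup\cdots\cup W_m$ with sections $s_k$, restrict along $j\colon X\times Y\hookrightarrow Z\times Z$, $(a,b)\mapsto(i_Xa,i_Yb)$, to obtain an open cover $U_k=j^{-1}(W_k)$ of $X\times Y$. For $(a,b)\in U_k$ the path $\gamma=s_k(j(a,b))$ runs from $a$ to $b$ in $Z$; applying $r_X$ and $r_Y$ splits it into a path $r_X\gamma$ in $X$ from $a$ to $*$ and a path $r_Y\gamma$ in $Y$ from $*$ to $b$, so $(a,b)\mapsto(\overline{r_X\gamma},\,r_Y\gamma)$ is a continuous section over $U_k$ of the based path fibration of $X\times Y$. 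Hence each $U_k$ is categorical and $cat(X\times Y)\le m=TC(Z)$. Together these give $TC(Z)\ge\max\{TC(X),TC(Y),cat(X\times Y)\}$.

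For the upper bound, set $n=\max\{TC(X),TC(Y),cat(X\times Y)\}$ and try to assemble a motion planner on $Z$ from four ingredients of size at most $n$: motion-planning covers $\{B_k\}$ of $X\times X$ and $\{C_k\}$ of $Y\times Y$, and categorical covers $\{A_k\}$ of $X\times Y$ and $\{A'_k\}$ of $Y\times X$ together with their based-path sections (using $cat(Y\times X)=cat(X\times Y)$). Over the mixed quadrant $i_XX\times i_YY$ a categorical section $(\alpha,\beta)$ produces the $Z$-path ``run $\alpha$ backwards from $a$ to $*$, then $\beta$ forward from $*$ to $b$'', and symmetrically on $i_YY\times i_XX$, while $\{B_k\}$ and $\{C_k\}$ handle the pure quadrants; one then wants $W_k$ to be a union of suitable thickenings of $A_k,A'_k,B_k,C_k$.

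The hard part, and the reason for the hypotheses, is the gluing along the axes $i_XX\times\{*\}$, $\{*\}\times i_XX$, and their counterparts, where the quadrants meet. The four quadrants are closed in $Z\times Z$, so their covers must be enlarged to genuinely open subsets of $Z\times Z$, and the four candidate sections must be reconciled on overlaps: on $i_XX\times\{*\}$, for example, the planner from $B_k$ stays inside $X$, whereas the categorical planner from $A_k$ may detour into $Y$ and return. I would resolve this by using the non-degenerate basepoints to fix neighborhoods $U_X\ni *$ and $U_Y\ni *$ that deform to $*$ within $X$ and $Y$, and the normality of $X\times X$, $Y\times Y$, $X\times Y$ (hence of $Z\times Z$) to invoke the shrinking lemma and Urysohn functions: shrink the quadrant covers, cut them off so they are supported away from the axes except across controlled basepoint-neighborhood collars, and interpolate the sections over those collars so the concatenations vary continuously and agree on overlaps. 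Checking that the resulting $n$ open sets still cover $Z\times Z$ and that the interpolated sections are continuous is the main technical obstacle; the rest is formal.
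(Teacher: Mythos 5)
Your lower bound is correct and is actually more self-contained than the paper, which simply quotes the inequality $\max\{TC(X),TC(Y),cat(X\times Y)\}\leq TC(X\vee Y)$ from Dranishnikov's earlier work: the retract argument for $TC(X)\leq TC(Z)$ and the splitting of a $Z$-path from $a\in X$ to $b\in Y$ into the pair $(\overline{r_X\gamma},\,r_Y\gamma)$ to produce a based-path section over $X\times Y$ are both sound.

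The upper bound is where the entire content of the theorem lies, and there you stop exactly at the decisive step. You assemble the same four ingredients as the paper (motion planners on $X\times X$ and $Y\times Y$, categorical covers of $X\times Y$ and $Y\times X$ turned into sections of $\pi$ by running the contraction out and back), and you correctly observe that the four quadrants are closed and meet along whole axes such as $X\times\{*\}$, so that openness of the glued sets and agreement of the four candidate sections on overlaps must be forced. But your proposed resolution --- shrink, cut off near the axes, interpolate over collars with Urysohn functions --- is a plan rather than an argument: you never exhibit the modified sets, never verify that $n$ of them still cover $Z\times Z$, and never define the interpolated sections or check their continuity; you yourself call this ``the main technical obstacle.'' That is a genuine gap, since everything before it is formal. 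For comparison, the paper's route through this point is more rigid: using normality and non-degeneracy of the basepoints it first rebuilds the covers so that every member contains $w_0=(*,*)$ and every local section sends $w_0$ to the constant path (Lemma \ref{existencia-algorithmo-planejamento-baseado} for the $TC$-covers, Proposition \ref{existencia-cobetura-categorica-baseada} for the categorical ones), so that all four sections literally coincide at $w_0$, and then pastes the unions $T_i=V_i\cup U_i\cup W_i\cup Z_i$ via Lemma \ref{pasting}. Note, however, that Lemma \ref{pasting} assumes the closed pieces meet in a single point, whereas adjacent quadrants such as $X\times X$ and $X\times Y$ meet along all of $X\times\{*\}$ --- precisely the overlap you flagged --- so matching the sections at $w_0$ alone does not automatically dispose of it either. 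In short, you have correctly located the difficulty, but locating it is not overcoming it; until the gluing is actually carried out your proposal does not establish $TC(X\vee Y)\leq\max\{TC(X),TC(Y),cat(X\times Y)\}$.
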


\begin{remark}\label{poliedro-hausdorff-compacto}
It is known that not only the product of two CW complexes but also the product of a CW complex with any paracompact Hausdorff space is paracompact ( see \cite{morita1963product}, pg. 559). We recall that every paracompact Hausdorff space is normal.  Furthermore, every point of a CW complex is a non-degenerate basepoint (see \cite{rotman1988introduction}, Lemma 8.30, pg. 211).
\end{remark}

We note that, in view of the fact that $cat(X\times Y)\leq cat(X)+cat(Y)-1$, Theorem \ref{tc-wedges-} implies the inequality affirmed by Michael Farber in \cite{farber2006topology}, Theorem 19.1
\begin{equation*}
TC(X\vee Y)\leq\max\{TC(X),TC(Y), cat(X)+cat(Y)-1\}
\end{equation*} under the hypothesis that $X$ and $Y$ are two connected polyhedrons. Moreover, Theorem \ref{tc-wedges-} implies the same equality recently proved by Alexander Dranishnikov in (\cite{dranishnikov2017topological}, Theorem 7)
\begin{equation*}
TC(X\vee Y)=\max\{TC(X),TC(Y), cat(X\times Y)\}
\end{equation*} under the hypothesis that $X$ and $Y$ are two connected CW complexes. Thus, Theorem \ref{tc-wedges-} implies that the condition of dimension considered in (\cite{dranishnikov2017topological}, Theorem 7) can be relaxed.

\begin{example}
We have $TC(\mathbb{RP}^3\vee \mathbb{S}^5)=5$.
\end{example}

\begin{corollary}(\cite{dranishnikov2017topological}, Theorem 2)
For all discrete groups, we have
\[TC(G\ast H)=\max\{TC(G),TC(H),cd(G\times H)+1\}.\] 
\end{corollary}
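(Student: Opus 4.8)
The plan is to deduce this group-theoretic statement from the topological Theorem \ref{tc-wedges-} by passing to classifying spaces. Recall that for a discrete group $G$ the invariant $TC(G)$ means, by definition, $TC(K(G,1))$, where $K(G,1)$ is an Eilenberg--MacLane space (a classifying space $BG$). The key algebraic-topological input is that a wedge of two aspherical CW complexes is again aspherical, so that $K(G,1)\vee K(H,1)$ is a model for $K(G\ast H,1)$: by van Kampen's theorem its fundamental group is the free product $G\ast H$, while its universal cover is a contractible ``tree'' of copies of the contractible universal covers of the two factors, hence itself contractible. Since $TC$ is a homotopy invariant (see the Remark), this gives $TC(G\ast H)=TC\big(K(G,1)\vee K(H,1)\big)$.

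First I would fix CW-complex models for $K(G,1)$ and $K(H,1)$, chosen path connected and with a single $0$-cell serving as basepoint. By Remark \ref{poliedro-hausdorff-compacto}, every point of a CW complex is a non-degenerate basepoint, and the product of two CW complexes is paracompact Hausdorff, hence normal; the same applies to $K(G,1)\times K(G,1)$, $K(H,1)\times K(H,1)$ and $K(G,1)\times K(H,1)$. Therefore the hypotheses of Theorem \ref{tc-wedges-} are met, and applying it yields
\[
TC(G\ast H)=\max\big\{TC(K(G,1)),\,TC(K(H,1)),\,cat(K(G,1)\times K(H,1))\big\}.
\]

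Next I would identify the three terms. By definition $TC(K(G,1))=TC(G)$ and $TC(K(H,1))=TC(H)$. For the last term, note that $K(G,1)\times K(H,1)$ is an Eilenberg--MacLane space $K(G\times H,1)$, since the fundamental group of a product is the product of the fundamental groups while all higher homotopy groups vanish. Hence $cat(K(G,1)\times K(H,1))=cat(K(G\times H,1))$. Invoking the Eilenberg--Ganea theorem, together with this paper's normalization (in which $cat$ is one greater than the reduced Lusternik--Schnirelmann category), one has $cat(K(\Gamma,1))=cd(\Gamma)+1$ for any discrete group $\Gamma$; taking $\Gamma=G\times H$ gives $cat(K(G,1)\times K(H,1))=cd(G\times H)+1$. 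Substituting the three identifications into the displayed formula produces exactly $TC(G\ast H)=\max\{TC(G),TC(H),cd(G\times H)+1\}$.

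The main obstacle I anticipate is bookkeeping rather than conceptual. On the topological side one must ensure that the chosen $K(G,1)$ models genuinely satisfy the normality and non-degeneracy hypotheses of Theorem \ref{tc-wedges-}, which is precisely why passing to CW models and citing Remark \ref{poliedro-hausdorff-compacto} is essential. On the algebraic side one must be careful about the normalization when using $cat(K(\Gamma,1))=cd(\Gamma)+1$, since the Eilenberg--Ganea theorem is classically stated for the reduced category and for $cd(\Gamma)\geq 3$, with the remaining low-dimensional cases handled by separate, standard arguments.
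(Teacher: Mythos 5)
Your proposal is correct and follows essentially the same route as the paper: apply Theorem \ref{tc-wedges-} to CW models of the classifying spaces, use that $BG\vee BH$ is a $K(G\ast H,1)$ and $BG\times BH$ is a $K(G\times H,1)$, and convert $cat$ to $cd+1$. The only cosmetic differences are that you verify the normality and non-degeneracy hypotheses explicitly via Remark \ref{poliedro-hausdorff-compacto} (which the paper leaves implicit) and cite Eilenberg--Ganea where the paper cites Theorem~1 of the Dranishnikov--Sadykov preprint for the identity $cat(\Gamma)=cd(\Gamma)+1$.
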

\begin{proof}
It is well-known the wedge $BG\vee BH$ is a classifying space of the free product $G\ast H$. Also, the product $BG\times BH$ is a classifying space of the product $G\times H$. Furthermore, by (\cite{dranishnikov2017topological}, Theorem 1), for all discrete groups $G$, $cat(G)=cd(G)+1$. Then, by Theorem \ref{tc-wedges-}, we have \begin{eqnarray*} TC(G\ast H) &=& TC(BG\vee BH)\\
            &=& \max\{TC(G),TC(H),cat(BG\times BH)\}\\
            &=& \max\{TC(G),TC(H),cat(B(G\times H))\}\\
            &=& \max\{TC(G),TC(H),cd(G\times H)+1\}.
\end{eqnarray*}            
            
\end{proof}

%%%%%%%%%%%%%%%%%%%%%%%%%%%%%%%%%%%%%%%%%%%%%%%%%%%%%%%%%%%%%%%%%%%%%%%

\section{\bf Preliminaries}

We recall that \begin{equation}
\max\{TC(X),TC(Y), cat(X\times Y)\}\leq TC(X\vee Y)\label{reverse}
\end{equation} where $X,Y$ are any path-connected topological spaces (see \cite{dranishnikov2014topological}, Theorem 3.6).

\begin{proposition}(\cite{cornea2003lusternik}, Lemma 1.25, pg. 13)\label{existencia-cobetura-categorica-baseada}
Let $X$ be a path connected Hausdorff normal space with non-degenerate basepoint $x_0$. If $cat(X)\leq n$, then there is an open categorical cover $\{V_i\}_{i=1}^n$ such that $x_0\in V_i$ for all $i$ and $V_i$ is contractible to $x_0$ relative to $x_0$, that is, there exists a homotopy $H:V_i\times [0,1]\longrightarrow X$ with $H(x,0)=x,\forall x\in V_i$, $H(x,1)=x_0,\forall x\in V_i$ and $H(x_0,t)=x_0,\forall t\in [0,1]$.
\end{proposition}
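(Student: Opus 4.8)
The plan is to begin from a categorical open cover supplied by $cat(X)\le n$ and then repair it in two stages: first to force every member to contain $x_0$, and second (the genuinely delicate step) to upgrade each free contraction into one that fixes $x_0$ for all time. Since $cat(X)\le n$, I would fix an open cover $X=U_1\cup\cdots\cup U_n$ with each inclusion $U_i\hookrightarrow X$ null-homotopic, say $F_i^{0}\colon U_i\times[0,1]\to X$ with $F_i^{0}(x,0)=x$ and $F_i^{0}(x,1)=c_i$ constant. Because $X$ is path connected I would append a path from $c_i$ to $x_0$ to every point, obtaining $F_i\colon U_i\times[0,1]\to X$ with $F_i(x,0)=x$ and $F_i(x,1)=x_0$. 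For the indices with $x_0\in U_i$ this produces a loop $\lambda_i(t)=F_i(x_0,t)$ at $x_0$; choosing the appended path to be the reverse of the track $t\mapsto F_i^{0}(x_0,t)$ of the original contraction, I can arrange that each $\lambda_i$ is null-homotopic relative to its endpoints, a fact I will need later.

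Next I would use the non-degenerate basepoint to push $x_0$ into every member. Non-degeneracy means $\{x_0\}\hookrightarrow X$ is a cofibration, so there exist $\phi\colon X\to[0,1]$ with $\phi^{-1}(0)=\{x_0\}$ and a homotopy $D\colon X\times[0,1]\to X$ with $D(x,0)=x$, $D(x_0,t)=x_0$, and $D(x,1)=x_0$ whenever $\phi(x)<1$. Writing $N=\phi^{-1}\big([0,1)\big)$, this exhibits an open neighbourhood of $x_0$ that contracts to $x_0$ relative to $x_0$, and the map $D_1=D(\cdot,1)$ collapses $N$ to $x_0$ while being homotopic to $\mathrm{id}_X$ relative to $x_0$. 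Replacing each $U_i$ by $D_1^{-1}(U_i)$ keeps the cover open and categorical, since $D$ shows $\mathrm{incl}\simeq D_1|_{D_1^{-1}(U_i)}$ and the latter factors through $U_i$; moreover $N\subseteq D_1^{-1}(U_i)$ whenever $x_0\in U_i$. For the finitely many members still missing $x_0$ I would merge in $N$ and, using the shrinking lemma and Urysohn functions furnished by normality, interpolate between the two free contractions on the overlap. After this stage each resulting set $V_i$ is open, contains $x_0$, covers $X$, and is still contractible in $X$ to $x_0$.

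The main step, and the one I expect to be the real obstacle, is to make each such contraction \emph{relative} to $x_0$, because a freely null-homotopic inclusion need not be based null-homotopic. Here I would exploit that $\{x_0\}\hookrightarrow V_i$ is again a cofibration (inherited from the non-degeneracy of $x_0$ together with normality), so that $V_i\times\{0,1\}\cup\{x_0\}\times[0,1]\hookrightarrow V_i\times[0,1]$ enjoys the homotopy extension property. On that subspace the contraction restricts to the inclusion, to the constant map $x_0$, and to the loop $\lambda_i$; since $\lambda_i$ is null-homotopic relative to its endpoints, I can deform this restriction, rel $V_i\times\{0,1\}$, to one that is constantly $x_0$ on $\{x_0\}\times[0,1]$, and then extend the deformation over $V_i\times[0,1]$ by the homotopy extension property. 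The endpoint of the extended homotopy is a contraction of $V_i$ to $x_0$ fixing $x_0$ for all time, which is exactly the homotopy $H$ demanded by the statement. The points that must be checked carefully are precisely that $\lambda_i$ can be rendered null-homotopic rel endpoints and that $x_0$ stays non-degenerate in each $V_i$; these are where the hypotheses of normality and of the non-degenerate basepoint are indispensable.
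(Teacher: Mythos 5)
The paper does not actually prove this proposition; it imports it from Cornea--Lupton--Oprea--Tanr\'e (Lemma 1.25), and the argument used there is reproduced almost verbatim in this paper's own proof of Lemma \ref{existencia-algorithmo-planejamento-baseado}, which is the $TC$-analogue. That argument is different from yours and much lighter: one shrinks the categorical cover $\{U_i\}$ to $\{W_i\}$ with $\overline{W_i}\subseteq U_i$, takes the halo $N$ of $x_0$ together with its contraction $H$ fixing $x_0$ (this is where normality and non-degeneracy enter, via \cite{aguilar2002algebraic}, Theorem 4.1.16), and arranges each new member $V_i$ to be a \emph{disjoint} union of an open piece avoiding $x_0$ and an open piece inside $N$ containing $x_0$. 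The piece missing $x_0$ is contracted freely along a path to $x_0$ --- the condition $H(x_0,t)=x_0$ is vacuous there --- while the piece inside $N$ is contracted by $H$, which already fixes $x_0$; disjointness makes the glued homotopy continuous. No homotopy extension property and no analysis of the loop traced by $x_0$ is needed.

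Your route has two genuine gaps. First, for the members $U_i$ not containing $x_0$ you propose to ``merge in $N$ and \dots interpolate between the two free contractions on the overlap'' using Urysohn functions; there is no meaningful interpolation between two maps into an arbitrary space $X$, and a Urysohn reparametrization would require the two contractions to agree or be composable on the overlap, which they need not be. As written this step fails; the repair is precisely the disjointness trick above (shrink so that the piece of $N$ you adjoin misses $\overline{W_i}$), not interpolation. Second, your main step requires $\{x_0\}\hookrightarrow V_i$ to be a cofibration for each modified set $V_i$. This is not automatic: the Str\o m data $(\phi,D)$ for $\{x_0\}\hookrightarrow X$ does not restrict to $V_i$ because $D$ need not preserve $V_i$, and an open subspace of a normal space need not be normal, so neither hypothesis hands you this directly. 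It can be repaired (by the tube lemma one finds an open $W\ni x_0$ with $D(W\times[0,1])\subseteq V_i$ and rebuilds the Str\o m data inside $V_i$), but it is a real obligation that you have only asserted. Your observation that $\lambda_i$ can be forced to be null-homotopic rel endpoints by appending the reverse of the track of $x_0$ is correct and is exactly what makes the HEP argument close --- but the entire HEP stage becomes unnecessary once the sets are arranged as disjoint unions, which is why the cited proof avoids it.
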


\begin{lemma}\label{normal-nao-degenerado-secaolocal}
Suppose $X$ is a normal space with non-degenerate basepoint $x_0\in X$. Then, there exits an open neighbourhood of $z_0:=(x_0,x_0)\in X\times X$, $B\subseteq X\times X$, and a local section $s:B\longrightarrow PX$ of $\pi$ such that $s(z_0)(t)=x_0,\forall t\in [0,1]$.
\end{lemma}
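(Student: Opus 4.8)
The plan is to build the section explicitly by concatenating two contraction paths that meet at the basepoint. The non-degeneracy of $x_0$ (together with $X$ being normal, which guarantees the relevant NDR structure for the closed inclusion $\{x_0\}\hookrightarrow X$) provides an open neighbourhood $U$ of $x_0$ and a homotopy $H\colon U\times[0,1]\to X$ with $H(x,0)=x$ and $H(x,1)=x_0$ for all $x\in U$, and $H(x_0,t)=x_0$ for all $t$; in other words $U$ is contractible to $x_0$ relative to $x_0$. Concretely one takes the Str\o m data, a map $u\colon X\to[0,1]$ with $u^{-1}(0)=\{x_0\}$ and the accompanying homotopy, and sets $U=u^{-1}\big([0,1)\big)$.

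First I would set $B:=U\times U$, an open neighbourhood of $z_0=(x_0,x_0)$ in $X\times X$, and define $s\colon B\to PX$ by
\begin{equation*}
s(a,b)(t)=\begin{cases} H(a,2t), & 0\le t\le \tfrac12,\\ H(b,2-2t), & \tfrac12\le t\le 1. \end{cases}
\end{equation*}
This is the path that runs from $a$ to $x_0$ along $H(a,\cdot)$ and then from $x_0$ back to $b$ along the reverse of $H(b,\cdot)$. At $t=\tfrac12$ both branches equal $H(a,1)=x_0=H(b,1)$, so $s(a,b)$ is a well-defined path; evaluating at the endpoints gives $s(a,b)(0)=a$ and $s(a,b)(1)=b$, hence $\pi\circ s=\mathrm{id}_B$. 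At the distinguished point, $H(x_0,t)=x_0$ for all $t$ forces $s(z_0)(t)=x_0$ for all $t$, as required.

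The main step is continuity of $s$ as a map into $PX=X^{[0,1]}$ with the compact-open topology. I would first check that the adjoint map $\widehat{s}\colon B\times[0,1]\to X$, $\widehat{s}(a,b,t)=s(a,b)(t)$, is continuous: it is obtained by gluing the two continuous maps $(a,b,t)\mapsto H(a,2t)$ and $(a,b,t)\mapsto H(b,2-2t)$ along the closed slice $\{t=\tfrac12\}$, where they agree (both equal the constant $x_0$), so the pasting lemma applies. Since $[0,1]$ is locally compact Hausdorff, the exponential law gives a homeomorphism between continuous maps $B\times[0,1]\to X$ and continuous maps $B\to X^{[0,1]}$; thus continuity of $\widehat{s}$ yields continuity of $s$, completing the argument.

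The point requiring the most care is precisely this last adjunction step: one must invoke local compactness of $[0,1]$ so that the compact-open topology on $PX$ makes $s$ continuous, and one must verify that the two path halves genuinely agree on the overlap before applying the pasting lemma. Everything else is a direct check.
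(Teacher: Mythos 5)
Your proposal is correct and follows essentially the same route as the paper: both obtain the neighbourhood $N$ of $x_0$ and the relative contraction $H$ from normality plus non-degeneracy of the basepoint, set $B=N\times N$, and define $s(a,b)$ as the concatenation of $H(a,\cdot)$ with the reverse of $H(b,\cdot)$ (the paper writes this via the product homotopy $\widetilde{H}$ and the projections, but the resulting formula is identical to yours). Your added verification of continuity via the pasting lemma and the exponential law is a detail the paper leaves implicit.
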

\begin{proof}
Note that, by normality and the non-degenerate basepoint hypothesis, there is an open neighbourhood $N$ of $x_0$ and a homotopy $H:N\times [0,1]\longrightarrow X$ with $H(x,0)=x,\forall x\in N$, $H(x,1)=x_0,\forall x\in N$ and $H(x_0,t)=x_0,\forall t\in [0,1]$ (see \cite{aguilar2002algebraic}, Theorem 4.1.16, pg. 94). 

Define an open neighbourhood of $z_0:=(x_0,x_0)$ by $B:= N\times N\subseteq X\times X$. Now, define a homotopy $\widetilde{H}:B\times [0,1]\longrightarrow X\times X$ by \begin{equation}
\widetilde{H}((x,y),t):=(H(x,t),H(y,t)),\forall (x,y)\in B,\forall t\in [0,1].
\end{equation} We note here that
\begin{eqnarray*}
\widetilde{H}((x,y),0) &=& (H(x,0),H(y,0)) \\ &=& (x,y),~\forall (x,y)\in B;\\
\widetilde{H}((x,y),1) &=& (H(x,1),H(y,1)) \\ &=& (x_0,x_0),~\forall (x,y)\in B;\\
\widetilde{H}((x_0,y_0),t) &=& (H(x_0,t),H(x_0,t)) \\ &=& (x_0,y_0),~\forall t\in [0,1].
\end{eqnarray*} Finally, we can define a local section of $\pi$ which satisfies the conclusions of the lemma. Let \[s:B\longrightarrow PX,~ u\mapsto s(u)(t)=\left\{
  \begin{array}{ll}
    p_1\circ \widetilde{H}(u,2t), & \hbox{$0\leq t\leq 1/2$;} \\
   p_2\circ \widetilde{H}(u,2-2t), & \hbox{$1/2\leq t\leq 1$,}
  \end{array}
\right.\] where $p_i:X\times X\longrightarrow X$ for $i=1,2$ denotes projection onto the $i-$th coordinate. 
\end{proof}

\begin{lemma}\label{existencia-algorithmo-planejamento-baseado}
Let $X$ be a path-connected Hausdorff normal space with non-degenerate basepoint $x_0\in X$ and such that the product $X\times X$ is normal. If $TC(X)\leq n$, then there is an open cover $V_1,\ldots,V_n\subseteq X\times X$, such that $z_0:=(x_0,x_0)\in V_i,\forall i=1,\ldots,n$ and over each $V_i$ there exists a continuous function $s_i:V_i\longrightarrow PX$, $\pi\circ s_i = id$ such that $s_i(z_0)(t)=x_0,\forall t\in [0,1]$.
\end{lemma}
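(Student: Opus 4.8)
The plan is to upgrade a given motion planner for $X$ into a \emph{based} one by splicing in the local based section of Lemma~\ref{normal-nao-degenerado-secaolocal} near $z_0$. Since $TC(X)\le n$, I first fix an open cover $X\times X=U_1\cup\cdots\cup U_n$ together with local sections $\sigma_i\colon U_i\to PX$ of $\pi$. By Lemma~\ref{normal-nao-degenerado-secaolocal} I fix an open neighbourhood $B=N\times N$ of $z_0$ and a section $s\colon B\to PX$ with $s(z_0)(t)=x_0$ for all $t$, where $H\colon N\times[0,1]\to X$ is the contraction of $N$ to $x_0$ furnished by the non-degenerate basepoint. Using normality (hence regularity) of $X\times X$ I choose open sets $z_0\in B_2\subseteq\overline{B_2}\subseteq B_1\subseteq\overline{B_1}\subseteq B$, and by Urysohn's lemma a map $\phi\colon X\times X\to[0,1]$ with $\phi\equiv 1$ on $\overline{B_2}$ and $\phi\equiv 0$ off $B_1$. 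Setting $V_i:=U_i\cup B_1$, each $V_i$ contains $z_0$ and the family $\{V_i\}_{i=1}^n$ still covers $X\times X$.

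Next I would define $s_i\colon V_i\to PX$ so that it agrees with $s$ on $\overline{B_2}$, agrees with $\sigma_i$ off $B_1$, and interpolates on the collar $B_1\setminus\overline{B_2}$ by \emph{pushing the whole path $\sigma_i(u)$ a fraction $\phi(u)$ of the way to $x_0$}. Concretely, for $u=(x,y)$ I let $s_i(u)$ be the concatenation of the drag-in $t\mapsto H(x,\phi(u)t)$ from $x$ to $H(x,\phi(u))$, the middle path $t\mapsto H(\sigma_i(u)(t),\phi(u))$ joining $H(x,\phi(u))$ to $H(y,\phi(u))$, and the drag-out $t\mapsto H(y,\phi(u)(1-t))$ back to $y$. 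Where $\phi\equiv 0$ this returns $\sigma_i(u)$; where $\phi\equiv 1$ the middle degenerates to the constant $x_0$ and the formula recovers exactly $s(u)$, so in particular $s_i(z_0)$ is the constant path at $x_0$ since $H(x_0,\cdot)\equiv x_0$. That $\pi\circ s_i=\mathrm{id}$ is immediate, because the two drags are mutually inverse on the endpoints and the middle path runs between $x$ and $y$.

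The main obstacle is continuity of $s_i$ across the collar, that is, realising the middle interpolation $t\mapsto H(\sigma_i(u)(t),\phi(u))$ as a genuine section-valued map. This is exactly where the contraction $H$ is indispensable and also where the difficulty concentrates: the expression $H(\sigma_i(u)(t),\phi(u))$ is only defined when $\sigma_i(u)([0,1])\subseteq N$ on the region $\{\phi>0\}$, whereas $\sigma_i(u)$ may leave $N$ for $u$ near $z_0$ (if $z_0\in U_i$, its value there is a loop at $x_0$ that need not lie in $N$). I would dispose of the harmless indices with $z_0\notin\overline{U_i}$ by shrinking $B_1$ to be disjoint from those $\overline{U_i}$, so that $V_i$ splits as a disjoint union on which $s_i=\sigma_i$ and $s_i=s$ separately; the real work is to shrink $B_1$ and refine the planner near the diagonal for the remaining indices so that the relevant paths stay inside $N$, making the splice continuous. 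Normality supplies the nested shrinkings and the function $\phi$ used throughout, and Proposition~\ref{existencia-cobetura-categorica-baseada} is the categorical analogue guiding the argument; once continuity on the collar is secured, the sets $V_i$ and sections $s_i$ satisfy every assertion of the lemma.
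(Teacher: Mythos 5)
There is a genuine gap, and it sits exactly where you locate it: the interpolation on the collar. Your formula $t\mapsto H(\sigma_i(u)(t),\phi(u))$ requires the entire image $\sigma_i(u)([0,1])$ to lie in $N$ whenever $\phi(u)>0$, and this fails in an essential way. For an index with $z_0\in U_i$, the path $\sigma_i(z_0)$ is a loop at $x_0$ that may wander anywhere in $X$, and the same is true of $\sigma_i(u)$ for $u$ near $z_0$; no shrinking of $B_1$ can help, since $B_1$ must always contain $z_0$. Worse, the difficulty is not merely one of finding the right formula: over the connected collar $B_1\setminus\overline{B_2}$ the two sections $\sigma_i$ and $s$ of the fibration $\pi$ differ by a map into the loop space $\Omega X$, and a continuous splice of the kind you describe would amount to a vertical homotopy between them, which is obstructed in general (e.g.\ when the loop $\sigma_i(z_0)$ is essential). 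So ``the real work'' you defer is not a technical refinement; it is an obstruction that cannot be removed by refining the planner near the diagonal. Your disposal of the indices with $z_0\notin\overline{U_i}$ is also slightly off: the natural dichotomy is $z_0\in U_i$ versus $z_0\notin U_i$, and in the latter case one may still have $z_0\in\overline{U_i}$, so you cannot shrink $B_1$ away from $\overline{U_i}$; one must instead shrink $U_i$ to some $W_i$ with $\overline{W_i}\subseteq U_i$ and take the neighbourhood of $z_0$ disjoint from $\overline{W_i}$.

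The paper sidesteps the interpolation entirely by making the two pieces of each $V_i$ \emph{disjoint} open sets, so that no Urysohn function and no deformation are needed. Concretely: choose $\mathcal{N}\subseteq B$ an open neighbourhood of $z_0$ contained in every $U_i$ with $z_0\in U_i$ and disjoint from $\overline{W_j}$ for every $j$ with $z_0\notin U_j$; then choose $M$ with $z_0\in M\subseteq\overline{M}\subseteq\mathcal{N}$ and set $V_i=(U_i\cap(X\times X-\overline{M}))\cup M$ for the first kind of index and $V_i=W_i\cup\mathcal{N}$ for the second. Each $V_i$ is a union of two disjoint open sets, one carrying $\xi_i$ (resp.\ $\sigma_i$ in your notation) and the other carrying the based section $s$ from Lemma~\ref{normal-nao-degenerado-secaolocal}; continuity of the glued section is then automatic, and the cover still has $n$ members because $\overline{M}-M$ is recovered inside $\mathcal{N}$. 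If you replace your collar construction by this disjointness trick, the rest of your argument goes through.
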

\begin{proof}
By $TC(X)\leq n$, let $\{U_i\}_{i=1}^{n}$ be an open cover with respective local sections $\xi_i:U_i\longrightarrow PX$.  Note that, by normality of $X\times X$, there is a refined open cover $\{W_i\}_{i=1}^{n}$ with $W_i\subseteq \overline{W_i}\subseteq U_i$ for each $i=1,\ldots,n$. By Lemma \ref{normal-nao-degenerado-secaolocal}, there exists an open neighbourhood of $z_0$, $B$, and a local section $s:B\longrightarrow PX$ of $\pi$ such that $s(z_0)(t)=x_0,\forall t\in [0,1]$. 

Without loss of generality, we can assume $z_0=(x_0,x_0)\in U_i$ for $i=1,\ldots,k$, for some $1\leq k\leq n-1$ and $z_0=(x_0,x_0)\notin U_i$, for $i=k+1,\ldots,n$. Define a new open neighbourhood of $z_0=(x_0,x_0)$ by \begin{equation}
\mathcal{N} = B\cap U_1\cap\cdots\cap U_k\cap(X\times X-\overline{W}_{k+1})\cap\cdots\cap(X\times X-\overline{W}_{n})\subseteq B.
\end{equation} Note that $z_0\in \mathcal{N}$ and $\mathcal{N}\cap W_j=\emptyset,\forall j=k+1,\ldots,n$.

Now, again by normality of $X\times X$, there exists an open set $M$ with $z_0=(x_0,x_0)\in M\subseteq \overline{M}\subseteq \mathcal{N}$. Note that $\mathcal{N}\subseteq U_i$ for each $i=1,\ldots,k$.

Now we can define the open cover which satisfies the conclusions of the lemma by \begin{equation}
V_i := \left\{
  \begin{array}{ll}
    (U_i\cap (X\times X-\overline{M}))\cup M, & \hbox{ for $i=1,\ldots,k$,} \\
   W_i\cup \mathcal{N}, & \hbox{for $i=k+1,\ldots,n$.}
  \end{array}
\right.
 \end{equation} Note first that $\{V_i\}_{i=1}^{n}$ covers $X\times X$. This fact follows because \[[(U_i\cap (X\times X-\overline{M}))\cup M]\cup (\overline{M}-M)=U_i\] and $\overline{M}\subseteq \mathcal{N}$, implies \[[(U_i\cap (X\times X-\overline{M}))\cup M]\cup \mathcal{N}\supseteq U_i,\] but $W_i\subseteq U_i$, then  \[[(U_i\cap (X\times X-\overline{M}))\cup M]\cup \mathcal{N}\supseteq W_i,\] for each $i=1,\ldots,k$. Then, $\bigcup_{i=1}^nV_i\supseteq \bigcup_{i=1}^n W_i$ but  $\{W_i\}_{i=1}^{n}$ covers $X\times X$, therefore $\{V_i\}_{i=1}^{n}$ covers $X\times X$. Note that $z_0=(x_0,x_0)\in V_i, \forall i=1,\ldots,n$, because $z_0\in M$ and $z_0\in \mathcal{N}$. 
 
 Secondly, each $V_i,~i=1,\ldots,n$ consists of two disjoint (open in $X\times X$) subsets, one subset of $U_i$ not containing the base point and one subset of $\mathcal{N} $ containing the basepoint.  This, allows us to define the following local sections: for $i=1,\ldots,k$, define $s_i:V_i\longrightarrow PX$ by:
 \[ s_i(u) :=\left\{
  \begin{array}{ll}
   \xi_i(u), & \hbox{ if $u\in U_i\cap (X\times X-\overline{M})$;} \\
   s(u), & \hbox{ if $u\in M$.}
  \end{array}
\right.\] Note that $s(z_0)(t)=x_0,\forall t\in [0,1]$ because $z_0\in M$ and $s(z_0)$ is the constant path $C_{x_0}:[0,1]\longrightarrow X,C_{x_0}(t)=x_0,\forall t\in [0,1]$. 

For $i=k+1,\ldots,n$, define  $s_i:V_i\longrightarrow PX$ by:
 \[ s_i(u) :=\left\{
  \begin{array}{ll}
   \xi_i(u), & \hbox{ if $u\in W_i$;} \\
   s(u), & \hbox{ if $u\in \mathcal{N}$.}
  \end{array}
\right.\] Note that $s(z_0)(t)=x_0,\forall t\in [0,1]$ because $z_0\in \mathcal{N}$ and $s(z_0)$ is the constant path $C_{x_0}:[0,1]\longrightarrow X,C_{x_0}(t)=x_0,\forall t\in [0,1]$. 

\end{proof}

We recall the following statement.

\begin{lemma}\label{pasting}
Let $X$ be a topological space with $X=A\cup B$ where $A,B$ are closed subsets in $X$ such that $A\cap B=\{p\}$. Let $U$ be an open subset in $A$ with $p\in U$ and $V$ be an open subset in $B$ with $p\in V$. Then
\begin{enumerate}
    \item[(i)] $U\cup V$ is an open subset in $X$.
    \item[(ii)] If $f:(U,p)\longrightarrow (Z,z_0)$ and $g:(V,p)\longrightarrow (Z,z_0)$ are pointed maps. Then
    $f\cup g:U\cup V\longrightarrow Z$ defined by \begin{equation*} (f\cup g)(x) :=\left\{
  \begin{array}{ll}
    f(x), & \hbox{ if $x\in U$,} \\
   g(x), & \hbox{ if $x\in V$,}
  \end{array}
\right.
\end{equation*} is a continuous map.
    \end{enumerate}
\end{lemma}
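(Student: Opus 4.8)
The plan is to realize the relative openness of $U$ and $V$ by writing $U=A\cap U'$ and $V=B\cap V'$ with $U',V'$ open in $X$, and then to treat the two parts separately.

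For part (i), I would verify that $U\cup V$ is open in $X$ by producing, for each of its points, an open neighbourhood in $X$ that is contained in $U\cup V$. Since $B$ is closed, $X\setminus B$ is open, and for a point $x\in U$ with $x\neq p$ one has $x\in A\setminus B$; thus $U'\cap(X\setminus B)$ is an open neighbourhood of $x$, and any $y$ in it lies in $U'$ but not in $B$, hence (as $X=A\cup B$) in $A\cap U'=U$. The symmetric argument using $X\setminus A$ handles the points of $V$ other than $p$. The only point left is the base point itself: since $p\in U=A\cap U'$ and $p\in V=B\cap V'$, both $U'$ and $V'$ contain $p$, so $U'\cap V'$ is an open neighbourhood of $p$ in $X$, and any $y\in U'\cap V'$ lies in $A$ or in $B$, giving $y\in A\cap U'=U$ or $y\in B\cap V'=V$. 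Hence $U'\cap V'\subseteq U\cup V$, and $U\cup V$ is open.

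For part (ii), I would first note that $f\cup g$ is well defined: because $U\subseteq A$ and $V\subseteq B$, one has $U\cap V\subseteq A\cap B=\{p\}$, and on the single overlap point $f(p)=z_0=g(p)$ since $f,g$ are pointed, so $U\cap V=\{p\}$. Next I would reinterpret $U$ and $V$ as \emph{closed} subsets of the subspace $U\cup V$: as $A$ is closed in $X$, the set $(U\cup V)\cap A$ is closed in $U\cup V$, and since $V\cap A\subseteq\{p\}\subseteq U$ this intersection equals $U$; symmetrically $V=(U\cup V)\cap B$ is closed in $U\cup V$. Finally, $U\cup V$ is covered by the two closed sets $U$ and $V$, on which $f$ and $g$ are continuous and agree on $U\cap V=\{p\}$, so the classical gluing lemma for a finite closed cover yields continuity of $f\cup g$.

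The bookkeeping with the open representatives $U',V'$ is routine; the one genuinely delicate step, where the hypothesis $A\cap B=\{p\}$ does real work, is checking that $U\cup V$ is a neighbourhood of the base point $p$ in part (i). This is precisely the place that forces the passage to the common refinement $U'\cap V'$, and it is also the condition $U\cap V=\{p\}$ that makes the well-definedness and the closed-cover gluing in part (ii) go through cleanly.
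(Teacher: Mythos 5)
Your proof is correct and complete: the local-neighbourhood argument for (i), including the use of $U'\cap V'$ at the base point, and the reinterpretation of $U$ and $V$ as a closed cover of $U\cup V$ for the gluing in (ii), are exactly the standard details needed. The paper itself offers no proof of this lemma (it is merely ``recalled''), so there is nothing to compare against; your argument supplies precisely the missing verification.
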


%%%%%%%%%%%%%%%%%%%%%%%%%%%%%%%%%%%%%%%%%%%%%%%%%%%%%%%%%%%%%%%%%%%%%%%%%%%%%%%%%%%%%%%%%%%%%%%%%%%%%%%%%%%%%%%%%%%%%%%%%%%%%

\section{\bf Proof of Theorem \ref{tc-wedges-}}

\textit{Proof of Theorem \ref{tc-wedges-}}. 
Note that the product $(X\vee Y)\times (X\vee Y)$ is a union of four closed subsets:\begin{equation}
 (X\times X)\cup  (X\times Y)\cup ( Y\times X)\cup ( Y\times Y),
\end{equation} and any two of these sets intersect at a single point $w_0=((x_0,y_0),(x_0,y_0))$.

Let $n=\max\{TC(X),TC(Y), cat(X\times Y)\}$. Because $TC(X)\leq n$, by Lemma \ref{existencia-algorithmo-planejamento-baseado}, there exists an open cover $\{V_i\}_{i=1}^{n}$ of $X\times X$ such that $w_0\in V_i,\forall i=1,\ldots,n$ and over each $V_i$ there exists a local section $s_i:V_i\longrightarrow P(X)\subseteq P(X\vee Y)$ of $\pi$ such that $s_i(w_0)(t)=(x_0,y_0),\forall t\in [0,1]$. 

\noindent Similarly, because $TC(Y)\leq n$, by Lemma \ref{existencia-algorithmo-planejamento-baseado}, there exists an open cover  $\{U_i\}_{i=1}^{n}$ of $Y\times Y$ such that $w_0\in U_i,\forall i=1,\ldots,n$ and over each $U_i$ there exists a local section $\xi_i:U_i\longrightarrow P(\{x_0\}\times Y)\subseteq P(X\vee Y)$ of $\pi$ such that $\xi_i(w_0)(t)=(x_0,y_0),\forall t\in [0,1]$.

Also, because $cat\left((X\times\{y_0\})\times(\{x_0\}\times Y)\right)=cat(X\times Y)\leq n$, by Proposition \ref{existencia-cobetura-categorica-baseada}, there exists an open categorical cover $\{W_i\}_{i=1}^{n}$ of $X\times Y$ such that $w_0\in W_i,\forall i=1,\ldots,n$ and each $W_i$ is contractible to $w_0$ relative to $w_0$, that is, there exists a homotopy $H:W_i\times [0,1]\longrightarrow (X\times\{y_0\})\times(\{x_0\}\times Y)$ with $H(u,0)=u,\forall u\in W_i$, $H(u,1)=w_0,\forall u\in W_i$ and $H(w_0,t)=w_0,\forall t\in [0,1]$. For each $i=1,\ldots,n$, we can define the following local section $\rho_i:W_i\longrightarrow P(X\vee Y)$ by
\[ \rho_i(u)(t) :=\left\{
  \begin{array}{ll}
    p_1\circ H(u,2t), & \hbox{$0\leq t\leq 1/2$;} \\
   p_2\circ H(u,2-2t), & \hbox{$1/2\leq t\leq 1$.}
  \end{array}
\right.\] where $p_i:(X\times Y)\times (X\times Y)\longrightarrow (X\times Y)$ for $i=1,2$ denotes projection onto the $i-$th coordinate. Note that $\rho_i(w_0)(t)=(x_0,y_0),\forall t \in [0,1]$.

Similarly, because $cat\left((\{x_0\}\times Y)\times(X\times\{y_0\})\right)=cat(Y\times X)\leq n$, by Proposition \ref{existencia-cobetura-categorica-baseada}, there exists an open categorical cover $\{Z_i\}_{i=1}^{n}$ of $Y\times X$ such that $w_0\in Z_i,\forall i=1,\ldots,n$ and each $Z_i$ is contractible to $w_0$ relative to $w_0$, that is, there exists a homotopy $G:Z_i\times [0,1]\longrightarrow (\{x_0\}\times Y)\times(X\times\{y_0\})$ with $G(u,0)=u,\forall u\in Z_i$, $G(u,1)=w_0,\forall u\in Z_i$ and $G(w_0,t)=w_0,\forall t\in [0,1]$. For each $i=1,\ldots,n$, we can define the following local section  $\nu_i:Z_i\longrightarrow P(X\vee Y)$ by
\[ \nu_i(u)(t) :=\left\{
  \begin{array}{ll}
    p_1\circ G(u,2t), & \hbox{$0\leq t\leq 1/2$;} \\
   p_2\circ G(u,2-2t), & \hbox{$1/2\leq t\leq 1$.}
  \end{array}
\right.\] Note that $\nu_i(w_0)(t)=(x_0,y_0),\forall t \in [0,1]$.

Let $T_i:=V_i\cup U_i\cup W_i\cup Z_i$ and note that $T_i$ is an open subset of $(X\vee Y)\times (X\vee Y)$, for each $i=1,\ldots,n$ (see Lemma \ref{pasting}-(i)). Thus $\{T_i\}_{i=1}^{n}$  is an open cover of $(X\vee Y)\times (X\vee Y)$. For each $i=1,\ldots,n$, we can define a local section $\mu_i:T_i\longrightarrow P(X\vee Y)$ of $\pi$ (see Lemma \ref{pasting}-(ii)), by
\[ \mu_i(u)(t) :=\left\{
  \begin{array}{ll}
    s_i(u)(t), & \hbox{ if $u\in V_i$;} \\
    \xi_i(u)(t), & \hbox{ if $u\in U_i$;} \\
    \rho_i(u)(t), & \hbox{ if $u\in W_i$;} \\
   \nu_i(u)(t), & \hbox{ if $u\in Z_i$.}
  \end{array}
\right.\] Because they agree on the intersection. Indeed, $s_i(w_0)=\xi_i(w_0)=\rho_i(w_0)=\nu_i(w_0)=C_{(x_0,y_0)}$ is the constant path $C_{(x_0,y_0)}:[0,1]\longrightarrow X\vee Y,~C_{(x_0,y_0)}(t)=(x_0,y_0),\forall t\in [0,1]$.  Hence, because this open cover has $n$ members, we have \[TC(X\vee Y)\leq \max\{TC(X),TC(Y), cat(X\times Y)\}.\] The reverse inequality was given in (\ref{reverse}). Therefore we have \[TC(X\vee Y)=\max\{TC(X),TC(Y), cat(X\times Y)\}.\]

\begin{flushright}
 $\square$
 \end{flushright}

\bibliographystyle{plain}

\begin{thebibliography}{10}



\smallskip

\bibitem{morita1963product} Morita, Kiiti, {\it On the product of paracompact spaces}, Proceedings of the Japan Academy, The Japan Academy. {\bf 39} (1963), no. 8, 559--563. 


%\smallskip

%\bibitem{james1978category} James, Ioan Mackenzie, {\it On category, in the sense of Lusternik-Schnirelmann}, Topology, Elsevier. {\bf 17} (1978), no. 4, 331--348. 


%\smallskip

%\bibitem{cohen1978configuration} Cohen, FR and Taylor, LR, {\it Configuration spaces: applications to Gelfand-Fuks cohomology}, Bulletin of the American Mathematical Society, American Mathematical Society. {\bf 84} (1978), no. 1, 134--136.  

\smallskip


\bibitem{rotman1988introduction}Rotman, Joseph J, {\it An Introduction to Algebraic Topology}, Springer Science \& Business Media. {\bf 119} (1988).  

\smallskip


\bibitem{aguilar2002algebraic} Aguilar, Marcelo and Gitler, Samuel and Prieto, Carlos, {\it Algebraic Topology from a Homotopical Viewpoint}, Springer Science \& Business Media. (2002).  

%\smallskip

%\bibitem{farber2004instabilities} Farber, Michael, {\it Instabilities of robot motion}, Topology and its Applications, Elsevier. {\bf 140} (2004), no. 2-3, 245--266.    
%\smallskip

%\bibitem{farber2008invitation} Farber, Michael. \emph{Invitation to topological robotics}, European Mathematical Society, \textbf{2008}.

\smallskip

\bibitem{farber2006topology} Farber, Michael. \emph{Topology of robot motion planning}, Morse theoretic methods in nonlinear analysis and in symplectic topology, Springer. (2006), 185--230.    

\smallskip





\bibitem{farber2003topological} Farber, Michael, {\it Topological complexity of motion planning}, Discrete and Computational Geometry. {\bf 29} (2003), no. 2, 211--221.

%\smallskip

%\bibitem{fadell1962configuration} Fadell, Edward and Neuwirth, Lee, {\it Configuration spaces}. Math. Scand. {\bf 10} (1962), no. 4, 111-118.

\smallskip

\bibitem{cornea2003lusternik} Cornea, Octav and Lupton, Gregory and Oprea, John and Tanr{\'e}, Daniel, {\it Lusternik-Schnirelmann Category}, American Mathematical Society. (2003).

%\smallskip

%\bibitem{iwase2005lusternik} Iwase, Norio and Mimura, Mamoru and Nishimoto, Tetsu, {\it Lusternik--Schnirelmann category of non-simply connected compact simple Lie groups}. Topology and its Applications, Elsevier. {\bf 150} (2005), no. 1-3, 111--123.

\smallskip

\bibitem{dranishnikov2014topological} Dranishnikov, Alexander, {\it Topological complexity of wedges and covering maps}. Proceedings of the American Mathematical Society. {\bf 142} (2014), no. 12, 4365--4376.

%\smallskip

%\bibitem{iwase2016lusternik} Iwase, Norio and Miyauchi, Toshiyuki, {\it On Lusternik-Schnirelmann category of SO (10)}. Fundamenta Mathematicae, Institute of Mathematics Polish Academy of Sciences. {\bf 234} (2016), no. 3, 201--227.

\smallskip

\bibitem{dranishnikov2017topological} Dranishnikov, Alexander and Sadykov, Rustam, {\it The topological complexity of the free product}. arXiv preprint arXiv:1710.04711.  (2017).





%%%%%%%%%%%%%%%%%%%%%%%%%%%%%%%%%%%%%%%%%%%



\end{thebibliography}

\end{document}